\algrenewcommand\algorithmicrequire{\textbf{Precondition:}}
\algrenewcommand\algorithmicensure{\textbf{Postcondition:}}
\newtheorem{assum}{Assumption}
\newtheorem{remark}{Remark}
\newtheorem{definition}{Definition}
\newtheorem{proposition}{Proposition}
\newtheorem{example}{Example}
\newtheorem{theorem}{Theorem}
\newtheorem{lemma}{Lemma}
\newtheorem{corollary}{Corollary}[theorem]
\title{\LARGE \bf 
 Probabilistic state transfer, estimation and measures for optimal actuator/sensor placement for linear systems with packet dropouts}
\author{A. Sanand Amita Dilip
\thanks{A. Sanand Amita Dilip is with the Faculty of Electrical Engineering, IIT Kharagpur, India.
        {\tt\small sanand@ee.iitkgp.ac.in}}%
}
\begin{document}
\maketitle
\thispagestyle{empty}
\pagestyle{empty}


\begin{abstract}
We consider linear systems subject to packet dropouts and obtain 
necessary and sufficient conditions for an arbitrary state transfer and state estimation over a finite time instance $T$. 
The data loss signal is modeled using the Bernoulli random variable. 
We leverage properties of 
the Hadamard product in our approach and use the derived necessary and sufficient conditions to compute the probability that an arbitrary state transfer is possible at a specified 
time instant. Similarly, the probability of finding an exact state estimate is found 
using the observability counterparts of the results. 
 Using the necessary and sufficient conditions obtained for the invertibility of the Gramian, we give  
 new probabilistic measures for optimal actuator and sensor placement problems and obtain optimal/sub-optimal solutions. 
 We demonstrate by an example how the probabilities of packet dropouts influence the choice of an optimal actuator. 
 We also discuss how to implement feedback laws and the LQR problem for these models 
 involving packet dropouts. 
 \end{abstract}
 
 \section{Introduction}
 In many modern control systems, the plant and the controllers are geographically distributed and connected to each other 
 via a communication network. One expects that there are disruptions in  this communication network due to the presence of non-idealities such as packet losses in wireless communication. 
 There could be time instances where no actuator input is available for control or no sensor output to observe when there are packet dropouts. This greatly 
 influences system theoretic properties of control systems. 
 We refer the reader to \cite{JungersKunduHeemels},\cite{mishchat},\cite{gom}-\cite{tab},\cite{packetdropsiccps}
  for details on systems with wireless control, 
 packet dropouts and control over lossy networks.
 Discrete-time linear time invariant (LTI) systems are of the form {\small\begin{eqnarray}
  x(t+1)=Ax(t)+Bu(t),\label{dlin}
 \end{eqnarray}}$A\in \mathbb{C}^{n\times n}$, $B \in \mathbb{C}^{n\times m}$. 
 Following \cite{JungersKunduHeemels}, 
we express the system $(\ref{dlin})$ subject to packet dropouts as a switching system of the form $x(t+1)=Ax(t)+B\sigma(t)u(t)$, where 
{\small \begin{eqnarray}\label{switch}
 x(t+1) &=&\begin{cases} Ax(t)+Bu(t), \mbox{ if } \sigma(t)=1,\\
 Ax(t), \;\;\;\;\;\;\;\;\;\;\;\;\mbox{ if } \sigma(t)=0 \end{cases}
\end{eqnarray}}and $\sigma(\cdot): \mathbb{N}\rightarrow \{ 0,1\}$ is a binary switching signal taking random values either $0$ or $1$. In other words, we model 
$\sigma$ by a Bernoulli random variable where $p$ is the probability that no packet dropout occurs at a given time instant. 
 
 In \cite{JungersKunduHeemels}, 
  it is shown that there exists an algorithm deciding controllability and observability of \eqref{switch} in finite time when $\sigma$ 
  is subject to constraints defined by a directed graph. Instead of the constrained switching model used therein, we consider a probabilistic model for 
  the communication signal $\sigma$. 
  We give necessary and sufficient conditions for an arbitrary state 
  transfer of $(\ref{switch})$ using 
  the controllability Gramian; which allows us 
  to give a probabilistic measure of the energy required for a state transfer of $(\ref{switch})$ given the probability $p$ of a successful transmission of the input.
 
 The controllability Gramian plays an important role in linear systems. Various metrics on controllability using the controllability Gramian were 
 studied in \cite{Summers}-\cite{shchat} for example, the determinant and/or the trace of the controllability Gramian, the minimum eigenvalue of the controllability
 Gramian, the trace of the inverse of the controllability Gramian etc. 
 In \cite{Pasqualetti}, the problem of controlling complex networks was studied by designing a control input 
 to steer a network to a target state. The minimum eigenvalue of the controllability Gramian was used as a metric to quantify the difficulty of the control 
 problem. We refer the reader to \cite{tzoum} and \cite{fitch}-\cite{muller}  for more details on optimal actuator and sensor placement problems. 
 In this article, we propose a new probabilistic measure using the controllability and the observability Gramian for optimal actuator and sensor placement 
 problems. 
 
 The paper is organized as follows. 
 We study properties of the controllability Gramian associated with linear systems subject to packet dropouts $(\ref{switch})$. 
 Using the Hadamard decomposition of the controllability/observability Gramian, we study the state transfer/estimation
  problems for the proposed models, finding the corresponding probabilities. 
  We obtain necessary and sufficient conditions on the switching signal $\sigma$ (in terms of its non zero entries) 
 for a state transfer/estimation when $A$ is diagonalizable. Then, we propose a new probabilistic measure for optimal actuator/sensor placement problem; 
 using the controllability/observability Gramian associated with a certain class of signals 
 and leverage the obtained results to tackle 
 the optimization problem. Finally, we discuss about the LQR problem and feedback laws for these models and mention a few future work possibilities in conclusion. 
\section{preliminaries}
In this section, we build some preliminaries to be used in the sequel. 
\begin{definition}\label{switching_sig}
 \begin{figure}[hbt]\label{markovmod} 
\centering
  \includegraphics[width=40mm]{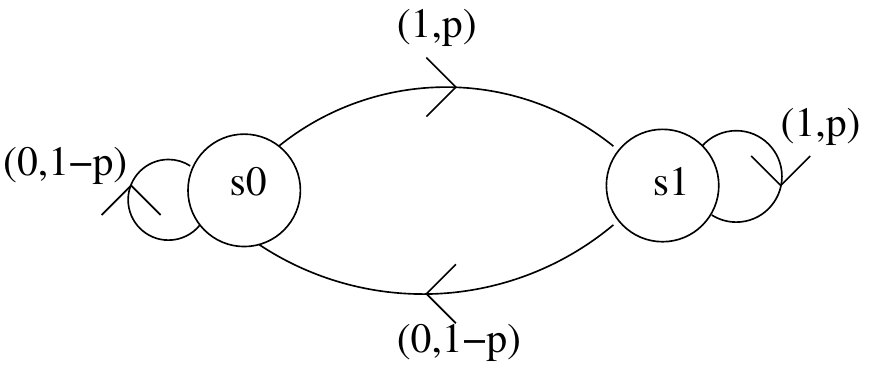}
  \caption{Markovian modeling of a switching signal.}
  \label{3dropnw}
\end{figure}
Suppose that $\forall t \in \mathbb{N}\cup \{0\}$, the probability that no packet dropout occurs at $t$ is $p$. 
 An admissible switching signal $\sigma$ is defined by the Markovian model (Figure $1$) where the two nodes are labeled as $s_0$ and $s_1$ and edges 
 are labeled by pairs $(1,p)$ and $(0,1-p)$.   
  \emph{A sequence $\sigma(0)\sigma(1)\ldots$ is admissible if there exists a path in the graph above 
  where the successive first component of the edge labels carries the sequence.} 
 The probability of occurrence of $\sigma$ is obtained by multiplying the second components of all the edge labels in the path above. The set of all 
 admissible switching signals for length $t$ is denoted by $\mathcal{S}^t$. 
\end{definition}
We now define the controllability matrix for $(\ref{switch})$. 
Rewriting $(\ref{switch})$ as  
{\small\begin{eqnarray}
 x(t)&=&Ax(t-1)+B\sigma(t-1)u(t-1) \\
 &=&A^tx(0)+\sum_{i=0}^{t-1}A^{t-1-i}B\sigma(i)u(i) \\
&=&A^tx(0)+C_{\sigma(t-1)}(A,B)\bar{u}\label{ip}
\end{eqnarray}}
where {\small$C_{\sigma(t-1)}(A,B)=$} 
{\small\begin{eqnarray}\label{controllb}
 &&\left[ \begin{array}{cccc} A^{t-1}B\sigma(0)  &\cdots  & AB\sigma(t-2)& B\sigma(t-1)\end{array}\right]
\end{eqnarray}}is the controllability matrix associated with a signal $\sigma$ at time $(t-1)$ and
{\small\begin{eqnarray}
\bar{u}=\left[ \begin{array}{ccccc}  u^{*}(0)&u^{*}(1)&\cdots&  u^{*}(t-2) & u^{*}(t-1)\end{array}\right]^{*}. 
\end{eqnarray}}
We now give an expression for the controllability 
Gramian for the system $(\ref{switch})$ for a fixed signal $\sigma$ and a fixed time $t$. 
\begin{definition} 
 {\small\begin{eqnarray}
  W_{\sigma(t-1)}:=C_{\sigma(t-1)}(A,B)C_{\sigma(t-1)}(A,B)^{*}
 \end{eqnarray}}is the controllability Gramian associated with system (\ref{switch}) at time $(t-1)$ with respect to the switching signal $\sigma$. 
\end{definition}
Recall that for classical discrete LTI systems, the controllability Gramian is given by {\small$W_{t}(A,B)=\sum_{i=0}^{t}A^iBB^{*}(A^{*})^i$}. 
For a fixed time $t$ and a fixed signal $\sigma$, the controllability Gramian for $(\ref{switch})$ is also given by 
{\small\begin{eqnarray}
 W_{\sigma(t)}= \sum_{i=0}^{t}\sigma(t-i)A^iBB^{*}(A^{*})^i.\label{gramexpsum}
\end{eqnarray}}
In the following proposition, we state how to compute the energy required for a state transfer from $x(0)$ to $x(t)$ 
for a fixed signal $\sigma$ using {\small$W_{\sigma(t)}$}. 
\begin{proposition}
 Assume that {\small$C_{\sigma(t-1)}(A,B)$} is full rank. 
 For a system of the form (\ref{switch}), the minimum input energy required to drive the state from $x(0)$ to $x(t)$ is 
{\small\begin{equation}
 (x(t)-A^tx(0))^{*}W_{\sigma(t-1)}^{-1}(x(t)-A^tx(0)).
\end{equation}}
\end{proposition}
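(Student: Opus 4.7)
The plan is to mirror the classical minimum-energy argument for discrete LTI systems, but carried out with the dropout-weighted controllability matrix $C_{\sigma(t-1)}(A,B)$ in place of the standard one. First, I would invoke the recurrence already derived in~(\ref{ip}) to turn the state-transfer requirement $x(t) = A^t x(0) + C_{\sigma(t-1)}(A,B)\bar u$ into the linear equation
\[
 C_{\sigma(t-1)}(A,B)\,\bar u \;=\; x(t) - A^t x(0).
\]
So the energy-minimization problem reduces to: among all $\bar u$ satisfying this underdetermined linear equation, find the one of least Euclidean norm $\bar u^{*}\bar u$.

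Next, because $C_{\sigma(t-1)}(A,B)$ is assumed full row rank, $W_{\sigma(t-1)} = C_{\sigma(t-1)} C_{\sigma(t-1)}^{*}$ is invertible, and I would propose the candidate
\[
 \bar u^{\circ} \;=\; C_{\sigma(t-1)}(A,B)^{*}\, W_{\sigma(t-1)}^{-1}\bigl(x(t)-A^t x(0)\bigr).
\]
A direct substitution verifies that $\bar u^{\circ}$ satisfies the linear constraint, and a short computation gives $(\bar u^{\circ})^{*}\bar u^{\circ} = (x(t)-A^tx(0))^{*} W_{\sigma(t-1)}^{-1}(x(t)-A^tx(0))$, which matches the stated formula.

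To show that $\bar u^{\circ}$ is actually optimal, I would write any admissible input as $\bar u = \bar u^{\circ} + v$ where $v \in \ker C_{\sigma(t-1)}(A,B)$ (forced by the constraint), and then use the fact that $\bar u^{\circ}$ lies in the row space of $C_{\sigma(t-1)}(A,B)$, hence is orthogonal to $v$. The Pythagorean identity $\bar u^{*}\bar u = (\bar u^{\circ})^{*}\bar u^{\circ} + v^{*}v \ge (\bar u^{\circ})^{*}\bar u^{\circ}$ yields the desired minimality.

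The only subtlety, and the place where the dropout structure actually enters, is that padding of $\bar u$ by zeros at dropout instants is not required: because $C_{\sigma(t-1)}(A,B)$ already has zero columns at those instants, the corresponding components of $\bar u^{\circ}$ automatically get multiplied out, so the formula applies verbatim without any masking of inputs. I expect no serious obstacle beyond flagging this observation, since the rest is a standard least-norm / pseudoinverse argument now carried by the full-rank hypothesis on $C_{\sigma(t-1)}(A,B)$.
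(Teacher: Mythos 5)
Your proof is correct and is essentially the argument the paper relies on: the paper simply cites the classical LTI minimum-energy result (Brockett, Ch.~3, Sec.~22, Thm.~1), and you have written out that same least-norm/pseudoinverse argument with $C_{\sigma(t-1)}(A,B)$ in place of the usual controllability matrix, including the correct observation that the optimal $\bar u$ vanishes at dropout instants since the corresponding rows of $C_{\sigma(t-1)}^{*}$ are zero. No gaps; nothing further needed.
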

\begin{proof}
Follows from the similar arguments used for the LTI case in {\small\cite{broc}}, Chapter {\small$3$}, Section {\small$22$}, Theorem {\small$1$}.
%
\end{proof}

We now give an expression for the controllability Gramian using the Hadamard product of matrices ({\small\cite{horn}}). 
\begin{assum}\label{assum1}
 We assume that the discrete linear system {\small$x(t+1)=Ax(t)+Bu(t)$} is controllable.
\end{assum}
\begin{assum}\label{assum2}
 We assume that {\small$A$} is diagonalizable. We also assume that no two eigenvalues of {\small$A$} have the same modulus. 
 Furthermore, {\small$0$} is not an eigenvalue of {\small$A$}. 
\end{assum}
The assumption of diagonalizability was also made in \cite{Pasqualetti} where the decentralized control of discrete LTI systems is considered and also in 
\cite{ol} for discrete LTI systems.

Let $v_1,v_2,\ldots,v_n$ be the left eigenvectors of $A$ and $\lambda_1,\ldots,\lambda_n$ be the corresponding eigenvalues. 
We define the following $n\times (t+1)$ matrix. 
{\small \begin{eqnarray}
  \Lambda_t &:=& 
  \left[ \begin{array}{cccc} 1 & \lambda_1& \ldots &\lambda_1^t\\1 & \lambda_2& \ldots &\lambda_2^t\\.&.&\ldots&.\\.&.&\ldots&.\\ 1 & \lambda_n& \ldots 
  &\lambda_n^t \end{array} \right].\label{lamb}
 \end{eqnarray}}
\begin{definition}
 For a switching signal $\sigma$, we define  
 {\small\begin{eqnarray}
  \Lambda_{\sigma(t)} &:=& 
  \left[ \begin{array}{cccc} \sigma(t) & \sigma(t-1)\lambda_1& \ldots &\sigma(0)\lambda_1^t\\ \sigma(t) & \sigma(t-1)\lambda_2& \ldots &\sigma(0)\lambda_2^t\\.&.&\ldots&.\\.&.&\ldots&.\\
  \sigma(t) & \sigma(t-1)\lambda_n& \ldots 
  &\sigma(0)\lambda_n^t \end{array} \right].\label{lambsig}
 \end{eqnarray}}
\end{definition}
 Let {\small$\hat{\Lambda}_{\sigma(t)}$} be the matrix obtained from the above matrix by keeping only non-zero columns. 
 Let {\small$V^{*}=\left[ \begin{array}{cccc} v_1^{*} & v_2^{*} &\ldots & v_n^{*}\end{array}\right]$} be a matrix whose columns are right eigenvectors of $A^{*}$.
\begin{theorem}\label{gramexp}
 Consider a discrete linear system of the form {\small$(\ref{switch})$}. Let {\small$V$} be a non-singular matrix such that rows of {\small$V$} form a set of left eigenvectors of {\small$A$}.  
 Then, choosing rows of {\small$V$} as a basis for {\small$\mathbb{C}^n$}, the controllability Gramian for {\small$(\ref{switch})$} for a switching 
 signal $\sigma$ is given by 
 {\small$(VBB^{*}V^{*}) \circ (\hat{\Lambda}_{\sigma(t)}\hat{\Lambda}_{\sigma(t)}^*)$} 
 (where $\circ$ denotes the Hadamard product and $\hat{\Lambda}_{\sigma(t)}$ is obtained from $(\ref{lambsig})$ by dropping columns with all zeros).
\end{theorem}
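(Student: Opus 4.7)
The plan is to exploit the diagonalization of $A$ to turn the powers $A^i$ in the Gramian sum into diagonal multiplications, and then read off the Hadamard-product structure by comparing entries.

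First, since the rows of $V$ are left eigenvectors, $VA=DV$ with $D=\mathrm{diag}(\lambda_1,\ldots,\lambda_n)$, and hence $VA^i = D^i V$ for every $i\ge 0$. Representing the Gramian in the new basis amounts to forming $V W_{\sigma(t)} V^*$ (this is how a Hermitian/bilinear form transforms under the basis change whose matrix is $V^*$, as opposed to how an operator transforms). Starting from the sum expression \eqref{gramexpsum},
\begin{equation*}
V W_{\sigma(t)} V^{*} \;=\; \sum_{i=0}^{t} \sigma(t-i)\, (VA^{i}) B B^{*} ((A^{*})^{i} V^{*}) \;=\; \sum_{i=0}^{t} \sigma(t-i)\, D^{i} (V B B^{*} V^{*}) (D^{*})^{i}.
\end{equation*}
Writing $M := V B B^{*} V^{*}$ and reading the $(j,k)$ entry, one gets
\begin{equation*}
\bigl(V W_{\sigma(t)} V^{*}\bigr)_{jk} \;=\; M_{jk} \sum_{i=0}^{t} \sigma(t-i)\, \lambda_{j}^{i} \bar{\lambda}_{k}^{i}.
\end{equation*}

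Second, I would compute the $(j,k)$ entry of $\Lambda_{\sigma(t)}\Lambda_{\sigma(t)}^{*}$ directly from definition \eqref{lambsig}, obtaining $\sum_{i=0}^{t}\sigma(t-i)^2\,\lambda_{j}^{i}\bar{\lambda}_{k}^{i}$. Because $\sigma$ takes values in $\{0,1\}$, $\sigma^2=\sigma$, so the two sums coincide. Therefore each entry of $V W_{\sigma(t)} V^{*}$ is exactly the product of the corresponding entries of $M$ and $\Lambda_{\sigma(t)}\Lambda_{\sigma(t)}^{*}$, i.e.\ $V W_{\sigma(t)} V^{*} = M \circ (\Lambda_{\sigma(t)}\Lambda_{\sigma(t)}^{*})$. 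Finally, a zero column of $\Lambda_{\sigma(t)}$ contributes nothing to the outer product $\Lambda_{\sigma(t)}\Lambda_{\sigma(t)}^{*}$, so one may drop all such columns and replace $\Lambda_{\sigma(t)}$ by $\hat{\Lambda}_{\sigma(t)}$ without altering the identity, yielding the stated formula.

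The routine algebra here is not the hard part; the step that needs the most care is the bookkeeping of the basis change. Because $V$ is not assumed unitary, one must justify why the Gramian should be expressed as $V W_{\sigma(t)} V^{*}$ rather than the operator-style conjugation $V W_{\sigma(t)} V^{-1}$. The clean way is to view $W_{\sigma(t)}$ as the matrix of the quadratic form $\xi \mapsto \xi^{*} W_{\sigma(t)} \xi$ (the minimum-energy functional from Proposition~1) and note that rewriting $\xi = V^{*}\eta$ produces $\eta^{*} V W_{\sigma(t)} V^{*} \eta$; it is this form matrix, not the operator, that lives in the basis of the (conjugate transposes of the) rows of $V$. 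Once this identification is in place, the rest of the proof is just the entrywise calculation sketched above together with the observation $\sigma^2=\sigma$.
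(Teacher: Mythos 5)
Your proposal is correct and follows essentially the same route as the paper's proof: change of basis to the left-eigenvector coordinates, use of $VA^i = D^iV$ to obtain the congruence $VW_{\sigma(t)}V^*=\sum_i \sigma(t-i)D^i(VBB^*V^*)(D^*)^i$, and identification of this sum entrywise with the Hadamard product after discarding zero columns. Your explicit entrywise check (using $\sigma^2=\sigma$) and the remark justifying the congruence $VW_{\sigma(t)}V^*$ rather than a similarity are simply more careful renderings of the computation the paper carries out directly.
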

\begin{proof}
 Let {\small$V$} be a matrix whose rows are left eigenvectors of {\small$A$}, hence {\small$VA=D_A V$} where {\small$D_A$} is a diagonal matrix having eigenvalues of {\small$A$}. Consider 
 a new basis for {\small$\mathbb{C}^n$} given by the rows of {\small$V$}. Let {\small$\bar{A}=VAV^{-1}, \bar{B}=VB$}. Thus, the controllability Gramian 
 {\small$\bar{W}_{\sigma(t)}=\sum_{i=0}^t
 \sigma(t)\bar{A}^i\bar{B}\bar{B}^*(\bar{A}^*)^i=$ 
 \begin{eqnarray}
 V(\sum_{i=0}^t \sigma(t){A}^iBB^*({A}^*)^i)V^*=\sum_{i=0}^t \sigma(t)V{A}^iBB^*({A}^*)^iV^*=\nonumber\\
 \sum_{i=0}^t \sigma(t){D_A}^iVBB^*V^*({D_A}^*)^i  
 =(VBB^{*}V^{*}) \circ (\hat{\Lambda}_{\sigma(t)}\hat{\Lambda}_{\sigma(t)}^*)\nonumber\end{eqnarray}}. 
\end{proof}

\section{Probabilistic state transfer and state estimation}
\subsection{Probabilistic state transfer}
 We use properties of the Hadamard product to obtain the necessary and sufficient conditions for an arbitrary state transfer of $(\ref{switch})$. 
We need the following result from 
\cite{horn}.
\begin{lemma}\label{hdposlemma}
 If {\small$X,Y$} are positive semi-definite, then so is {\small$X\circ Y$}. If, in addition, {\small$Y$} is positive definite and 
 {\small$X$} has no diagonal entry equal to {\small$0$},
then {\small$X\circ Y$} is positive definite. In particular, if both {\small$X$} and {\small$Y$} are positive
definite, then so is {\small$X\circ Y$}.
\end{lemma}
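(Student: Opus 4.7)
The plan is to reduce the Hadamard product to a sum of rank-one pieces via spectral decompositions. Writing $X=\sum_{k}\mu_k a_k a_k^{*}$ and $Y=\sum_{l}\nu_l b_l b_l^{*}$ with non-negative eigenvalues, the key identity is
$$(aa^{*})\circ(bb^{*}) = (a\circ b)(a\circ b)^{*},$$
a rank-one positive semi-definite matrix. Hence $X\circ Y = \sum_{k,l}\mu_k\nu_l\,(a_k\circ b_l)(a_k\circ b_l)^{*}$ is a non-negative combination of PSD matrices, which settles the first assertion.

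For the definite statement, I would compute $z^{*}(X\circ Y)z$ for an arbitrary nonzero $z\in\mathbb{C}^n$ and rearrange. Substituting the decomposition of $X$ and introducing auxiliary vectors $u_k$ with entries $(u_k)_i = \overline{(a_k)_i}\,z_i$, a short index reshuffle yields
$$z^{*}(X\circ Y)z = \sum_{k}\mu_k\,u_k^{*}Y u_k.$$
Since $Y\succ 0$ and each $\mu_k\geq 0$, this sum is non-negative, and vanishes if and only if $u_k=0$ for every $k$ with $\mu_k>0$.

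The main step, and the place where the diagonal hypothesis on $X$ is essential, is converting the vanishing of these $u_k$ into $z=0$. The hook is the identity $X_{ii}=\sum_k \mu_k\lvert (a_k)_i\rvert^2$, which is nonzero by hypothesis: for each coordinate $i$ some index $k$ must satisfy both $\mu_k>0$ and $(a_k)_i\neq 0$, and then $(u_k)_i=0$ forces $z_i=0$. Iterating over $i$ gives $z=0$, hence strict positive definiteness. The final \emph{in particular} clause is immediate, since any positive definite $X$ automatically has strictly positive diagonal entries. No serious obstacle is anticipated; the only care required lies in the index bookkeeping of the reshuffle and in pinpointing exactly where the no-zero-diagonal assumption is used.
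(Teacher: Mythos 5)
Your proof is correct. Note, however, that the paper does not prove this lemma at all: it simply cites Theorem~5.2.1 of Horn and Johnson's \emph{Topics in Matrix Analysis} (the Schur product theorem), so your self-contained argument is necessarily a different route from the paper's one-line citation, although it is essentially the classical proof found in that reference. Both of your key steps check out: the rank-one identity $(aa^{*})\circ(bb^{*})=(a\circ b)(a\circ b)^{*}$ combined with the spectral decompositions of $X$ and $Y$ gives positive semi-definiteness, and the rearrangement $z^{*}(X\circ Y)z=\sum_{k}\mu_k\,u_k^{*}Yu_k$ with $(u_k)_i=\overline{(a_k)_i}z_i$ is a valid index computation. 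Your handling of the strict case is also the right one: from $Y\succ 0$ the vanishing of the quadratic form forces $u_k=0$ for every $k$ with $\mu_k>0$, and the hypothesis $X_{ii}=\sum_k\mu_k\lvert(a_k)_i\rvert^{2}\neq 0$ is exactly what guarantees, coordinate by coordinate, some admissible $k$ with $(a_k)_i\neq 0$, hence $z_i=0$; the ``in particular'' clause follows since a positive definite $X$ has strictly positive diagonal. What your approach buys over the paper's citation is transparency about where the no-zero-diagonal assumption is used, which is precisely the hypothesis the paper later verifies (via controllability, ensuring $VBB^{*}V^{*}$ has nonzero diagonal) when applying the lemma to the Gramian decomposition in Theorem~3.
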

\begin{proof}
 We refer the reader to Theorem $5.2.1$ of \cite{horn}. 
\end{proof}
\begin{theorem}\label{singleipcdns}
 Let $p$ be the probability that $\sigma(t)=1$ where $t \in \mathbb{N}$. 
 Consider a single input system of the form (\ref{switch}). Suppose $A$ is non-singular and $(A,B)$ controllable. Then, 
 \begin{enumerate}
  \item ${W}_{\sigma(t)}$ is positive definite if and only if $\sigma$ is non-zero for at least $n$ time instances.
  \item The probability that an arbitrary state transfer from $x_0\in \mathbb{C}^n$ to $x_f\in \mathbb{C}^n$ is possible for $(\ref{switch})$ is given 
  by {\small$P(T)=\sum_{i=n}^T{T \choose i} p^i(1-p)^{T-i}$}.  
 \end{enumerate}
\end{theorem}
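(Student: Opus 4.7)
The plan is to prove the two parts in order, leaning on Theorem~\ref{gramexp} for part~1 and on a routine i.i.d.\ Bernoulli count for part~2.

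For part~1, I would work in the basis of left eigenvectors of $A$ guaranteed by Assumption~\ref{assum2}, writing the Gramian as
{\small\begin{equation*}
W_{\sigma(t)} = (VBB^{*}V^{*}) \circ (\hat{\Lambda}_{\sigma(t)}\hat{\Lambda}_{\sigma(t)}^{*})
\end{equation*}}via Theorem~\ref{gramexp}. Since the system is single input, $VB$ is a column vector, so $X:=VBB^{*}V^{*} = (VB)(VB)^{*}$ is rank-one positive semidefinite and its $i$th diagonal entry is $|v_i^{*}B|^{2}$. Controllability of $(A,B)$ together with diagonalizability of $A$ gives, via the PBH test, $v_i^{*}B \neq 0$ for every $i$, so $X$ has strictly positive diagonal entries.

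The second factor $Y:=\hat{\Lambda}_{\sigma(t)}\hat{\Lambda}_{\sigma(t)}^{*}$ is always positive semidefinite. Let $k$ be the number of time instants at which $\sigma$ is nonzero; then $\hat{\Lambda}_{\sigma(t)}$ is an $n\times k$ submatrix formed by selecting $k$ columns of the Vandermonde $\Lambda_t$ in \eqref{lamb}. Because the eigenvalues $\lambda_1,\ldots,\lambda_n$ are distinct (Assumption~\ref{assum2}), any $n$ columns of $\Lambda_t$ are linearly independent, so $\operatorname{rank}(\hat{\Lambda}_{\sigma(t)}) = \min(n,k)$. If $k\ge n$, then $Y$ is positive definite and Lemma~\ref{hdposlemma} applied to $X$ (PSD, nonzero diagonal) and $Y$ (PD) yields that $W_{\sigma(t)}$ is positive definite. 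Conversely, if $k<n$, I would avoid Hadamard-based reasoning (a singular factor need not yield a singular Hadamard product) and instead argue directly: the controllability matrix $C_{\sigma(t-1)}(A,B)$ has only $k$ nonzero columns in $\mathbb{C}^n$, so $\operatorname{rank}(W_{\sigma(t-1)}) \le k < n$, making $W_{\sigma(t-1)}$ singular.

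For part~2, combining part~1 with Proposition~1 shows that an arbitrary state transfer from $x_0$ to $x_f$ is possible exactly when $\sigma$ takes the value $1$ at least $n$ times within the $T$ time slots. Under the Markovian model of Definition~\ref{switching_sig}, the values $\sigma(0),\ldots,\sigma(T-1)$ are i.i.d.\ Bernoulli$(p)$, so the count of successes is Binomial$(T,p)$ and the probability of at least $n$ successes is exactly $\sum_{i=n}^{T}\binom{T}{i}p^{i}(1-p)^{T-i}$.

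The main technical obstacle is the rank argument for $\hat{\Lambda}_{\sigma(t)}$: verifying carefully that distinct eigenvalues plus the Vandermonde structure force $\operatorname{rank}(\hat{\Lambda}_{\sigma(t)})=\min(n,k)$, and keeping the ``only if'' direction clean by falling back on the controllability matrix rather than on Hadamard rank bounds. Everything else is bookkeeping: the PBH application is immediate from Assumption~\ref{assum1}--\ref{assum2}, and part~2 is a one-line binomial calculation once part~1 is in hand.
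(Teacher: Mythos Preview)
Your proposal is correct and matches the paper's proof in structure: the Hadamard decomposition via Theorem~\ref{gramexp}, the use of Lemma~\ref{hdposlemma} for the ``if'' direction with $VBB^{*}V^{*}$ having nonzero diagonal (the paper invokes Assumption~\ref{assum1} where you invoke PBH, same content), and the binomial count for part~2. The one methodological difference is in the ``only if'' direction: the paper stays within the Hadamard framework, using the rank bound $\operatorname{rank}(X\circ Y)\le \operatorname{rank}(X)\operatorname{rank}(Y)$ together with $\operatorname{rank}(VBB^{*}V^{*})=1$ to force $\hat{\Lambda}_{\sigma(t)}\hat{\Lambda}_{\sigma(t)}^{*}$ to be full rank, whereas you bypass this with the direct column-count on $C_{\sigma(t-1)}(A,B)$. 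Your route is more elementary and avoids importing an extra Hadamard inequality; the paper's route has the aesthetic advantage of handling both directions uniformly through the $\hat{\Lambda}_{\sigma(t)}$ factor, which is what motivates the later multi-input analysis.
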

\begin{proof}
 Note that after a change of basis, 
 {\small$W_{\sigma(t)}=(VBB^*V^*) \circ (\hat{\Lambda}_{\sigma(t)}\hat{\Lambda}_{\sigma(t)}^*)$}. 
 By Assumption \ref{assum1}, {\small$VBB^*V^*$} has non-zero diagonal entries. 
 With reference to the Lemma \ref{hdposlemma}, {\small$\bar{W}_{\sigma(t)}$} is positive definite if the matrix {\small$\hat{\Lambda}_{\sigma(t)}\hat{\Lambda}_{\sigma(t)}^*$} is positive definite. 
 For a single input system, {\small$VBB^*V^*$} has rank one. 
 It is shown in \cite{horn} that rank{\small$(X \circ Y) \le$} rank$(X)$rank$(Y)$. Thus, for {\small$(VBB^*V^*) \circ (\hat{\Lambda}_{\sigma(t)}\hat{\Lambda}_{\sigma(t)}^*)$} to be of full rank, 
 {\small$(\hat{\Lambda}_{\sigma(t)}\hat{\Lambda}_{\sigma(t)}^*)$} must be full rank. By Assumption \ref{assum1}, {\small$(\ref{dlin})$} is controllable and 
 by Assumption \ref{assum2}, {\small$A$} is diagonalizable. Hence, {\small$A$} must 
 have distinct eigenvalues. Thus, $(\hat{\Lambda}_{\sigma(t)}\hat{\Lambda}_{\sigma(t)}^*)$ is full rank 
 {\small$\Leftrightarrow$} {\small$\sigma$} is non-zero for at least {\small$n$} time instances. The probability of having {\small$n$} ones and 
 {\small$T-n$} zeros in {\small$T$} time instances is 
 {\small${T \choose n} p^n(1-p)^{T-n}$}. Thus, the second statement follows. 
\end{proof}

\begin{remark}
 For multi-input systems or the case where {\small$A$} is diagonalizable with repeated real eigenvalues, it could happen that 
 {\small$W_{\sigma(t)}$} is full rank but  
 both {\small$VBB^*V^*$} and {\small$\hat{\Lambda}_{\sigma(t)}$} are not full rank. Furthermore, when {\small$A$} has repeated eigenvalues, 
 {\small$\hat{\Lambda}_{\sigma(t)}$} is never 
 full rank. Thus, we can not apply Theorem \ref{singleipcdns} to characterize signals for which {\small${W}_{\sigma(t)}$} is positive definite. \QEDopen
\end{remark}
We give the following result from \cite{won} which is required in our next result for the case of repeated eigenvalues of {\small$A$}. 
\begin{proposition}\label{wh}
 Suppose {\small$(A, B)$} is controllable. Let {\small$m$} be the number of inputs and {\small$k$} be the cyclic index of {\small$A$} (i.e. the number of invariant factors of {\small$A$}). 
 Let {\small$\mathcal{B}$} be the column span of {\small$B=\left[ \begin{array}{cccc} b_1 & b_2& \ldots &b_m \end{array} \right]$}. 
 Let {\small$\mathcal{B}_i=\langle
 b_1,\ldots,b_i\rangle$ 
 $(1\le i \le m)$} where 
 {\small$\langle b_1,\ldots,b_i\rangle$} denotes the span of the corresponding vectors. 
 Let {\small$\alpha_1, \ldots , \alpha_k$} be the invariant factors of {\small$A$}. 
 Then, there exists {\small$A-$}invariant subspaces {\small$\mathcal{X}_i \subset \mathbb{C}^n$} and subspaces {\small$\mathcal{B}_i \subset \mathcal{B}$} 
 such that 
 \begin{itemize}
  \item {\small$\mathbb{C}^n = \mathcal{X}_1 \oplus \ldots \oplus \mathcal{X}_k$}.
  \item {\small$A$} restricted to {\small$\mathcal{X}_i$} is cyclic with minimal polynomial {\small$\alpha_i$}.
  \item {\small$\langle\mathcal{B}_i,A\mathcal{B}_i,\ldots,A^{n-1}\mathcal{B}_i\rangle = \mathcal{X}_1 \oplus \ldots \oplus \mathcal{X}_i$}.
 \end{itemize}
\end{proposition}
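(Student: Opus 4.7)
The plan is to derive both the cyclic decomposition and its compatibility with the column flag of $B$ from the $\mathbb{C}[\lambda]$-module structure on $\mathbb{C}^n$, where $\lambda$ acts as $A$ (so $\lambda \cdot x := Ax$). Controllability of $(A,B)$ is equivalent to saying that $\{b_1,\ldots,b_m\}$ generates $\mathbb{C}^n$ as a $\mathbb{C}[\lambda]$-module, since
\[
\sum_{j=1}^{m} \mathbb{C}[\lambda]\cdot b_j \;=\; \langle b_1,\ldots,b_m,Ab_1,\ldots,A^{n-1}b_m\rangle \;=\; \mathbb{C}^n.
\]

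First, I would apply the structure theorem for finitely generated modules over the PID $\mathbb{C}[\lambda]$ to obtain a decomposition $\mathbb{C}^n \cong \mathbb{C}[\lambda]/(\alpha_1)\oplus\cdots\oplus\mathbb{C}[\lambda]/(\alpha_k)$ with divisibility chain $\alpha_1\mid\alpha_2\mid\cdots\mid\alpha_k$; these are exactly the invariant factors of $A$, and $k$ is its cyclic index. Pulling the isomorphism back to $\mathbb{C}^n$ provides cyclic $A$-invariant subspaces $\mathcal{X}_1,\ldots,\mathcal{X}_k$ whose direct sum is $\mathbb{C}^n$ and such that $A|_{\mathcal{X}_i}$ has minimal polynomial $\alpha_i$. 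This settles the first two bullet points.

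Second, for the compatibility with the flag $\mathcal{B}_1\subset\mathcal{B}_2\subset\cdots\subset\mathcal{B}$, I would construct the $\mathcal{X}_i$ inductively. At step $i$, assuming $\mathcal{X}_1\oplus\cdots\oplus\mathcal{X}_{i-1}$ has been built as the reachable subspace from some $\mathcal{B}_{j_{i-1}}$, I would exhibit a vector $w_i$ lying in $\mathcal{B}_{j_i}$ (for a suitable next index $j_i$, after possibly reordering the $b_\ell$'s so that the flag in the conclusion matches the flag given in the hypothesis) whose image in the quotient $\mathbb{C}^n/(\mathcal{X}_1\oplus\cdots\oplus\mathcal{X}_{i-1})$ has minimal polynomial exactly $\alpha_i$. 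Setting $\mathcal{X}_i$ to be the $A$-cyclic subspace generated by a lift of $w_i$, chosen as a complement of $\mathcal{X}_1\oplus\cdots\oplus\mathcal{X}_{i-1}$ inside $\langle \mathcal{B}_{j_i}, A\mathcal{B}_{j_i},\ldots,A^{n-1}\mathcal{B}_{j_i}\rangle$, then yields the third bullet by construction.

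The main obstacle is precisely this choice of $w_i$: one must guarantee that $\mathcal{B}_{j_i}$, which is constrained to live inside $\mathcal{B}$, contains a vector whose minimal polynomial modulo the previously-constructed summands is $\alpha_i$ rather than a proper divisor thereof. This is where the divisibility chain $\alpha_1\mid\cdots\mid\alpha_k$ combines with a dimension/degree comparison between $\dim\langle \mathcal{B}_j, A\mathcal{B}_j,\ldots\rangle$ and partial sums $\sum_{\ell\le i}\deg\alpha_\ell$, and is forced by controllability (which, as noted above, says the full collection of $b_\ell$'s generates $\mathbb{C}^n$ as a $\mathbb{C}[\lambda]$-module). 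Since the statement is quoted verbatim from \cite{won}, I would defer the combinatorial bookkeeping of this induction to that reference.
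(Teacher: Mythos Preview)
The paper does not give a proof of this proposition at all; it simply refers the reader to Theorem~1.2 of \cite{won}. Your proposal therefore already does strictly more than the paper, and the approach you sketch---viewing $\mathbb{C}^n$ as a $\mathbb{C}[\lambda]$-module with $\lambda$ acting as $A$, invoking the structure theorem over the PID $\mathbb{C}[\lambda]$ to obtain the cyclic decomposition, and then inductively aligning the cyclic generators with a flag inside $\mathcal{B}$---is precisely the argument Wonham uses. Since you, too, ultimately defer the bookkeeping of the induction to \cite{won}, your proposal and the paper's treatment coincide.

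One small point worth correcting: you write the divisibility chain as $\alpha_1\mid\alpha_2\mid\cdots\mid\alpha_k$, whereas the paper (and Wonham) use the opposite convention $\alpha_{i+1}\mid\alpha_i$, so that $\alpha_1$ is the minimal polynomial and $\mathcal{X}_1$ is the \emph{largest} cyclic block. This matters for the third bullet, since $\langle\mathcal{B}_1,A\mathcal{B}_1,\ldots\rangle=\mathcal{X}_1$ must already have dimension $\deg\alpha_1=n_1$; with your ordering the first block would be the smallest, and the flag statement would fail as written. This is purely a labeling issue and does not affect the substance of your argument.
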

\begin{proof}
 We refer the reader to Theorem {\small$1.2$} of {\small\cite{won}}. 
\end{proof}
From Proposition {\small$\ref{wh}$} , it can be shown that there exists a basis such that {\small$A, B$} can be transformed as 
{\tiny\begin{eqnarray}
 A=   \left[ \begin{array}{cccc} A_1 & 0& \ldots &0\\0 & A_2& \ldots &0\\.&.&\ldots&.\\.&.&\ldots&.\\
  0 & 0& \ldots  &A_k \end{array} \right], B = \left[ \begin{array}{ccccc} b_{11} & b_{12}& \ldots &b_{1k}&*\\0 & b_{22}& \ldots &b_{2k}&*\\.&.&\ldots&.&.\\.&.&\ldots&.&.\\
  0 & 0& \ldots  &b_{kk}&* \end{array} \right]\label{can}
\end{eqnarray}}
where {\small$(A_i, b_{ii})$} is controllable ({\small\cite{won}}, page {\small$44$}) (note that entries {\small$*$} in the matrix {\small$B$} above denote all the remaining columns of {\small$B$}).
\begin{theorem}\label{repeig}
 Let $\alpha_i$ $(1\le i \le k)$ be the invariant factors of $A$ such that $\alpha_{i+1}|\alpha_i$ (where $k$ is the cyclic index of $A$). 
 Let $n_1$ be the degree of the minimal polynomial $\alpha_1$. Let $m$ be the number of inputs such that $k\le m \le n$.
 If the switching signal $\sigma$ has at least $n_1$ non-zero entries, then ${W}_{\sigma(t)}$ is positive definite.
\end{theorem}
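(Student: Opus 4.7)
The plan is to pass to the canonical coordinates supplied by Proposition \ref{wh} and reduce the multi-input claim to several single-input statements handled by Theorem \ref{singleipcdns}. In those coordinates, $A=\mathrm{diag}(A_1,\ldots,A_k)$ with $A_j$ acting on the cyclic subspace $\mathcal{X}_j$, and $B$ has the block upper-triangular form displayed in \eqref{can}. Because $k\le m$, I will use only the first $k$ columns of $B$, whose $l$-th column is supported on $\mathcal{X}_1\oplus\cdots\oplus\mathcal{X}_l$; the remaining $*$ columns can only enlarge the controllability span and may be dropped. Since $W_{\sigma(t)}=C_{\sigma(t-1)}(A,B)C_{\sigma(t-1)}(A,B)^{*}$, positive-definiteness of the Gramian is equivalent to full row rank of the controllability matrix, which is what I will verify.

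The preliminary step is to check that each single-input pair $(A_j,b_{jj})$ satisfies the hypotheses of Theorem \ref{singleipcdns}. By Proposition \ref{wh}, $(A_j,b_{jj})$ is controllable and $A_j$ is cyclic with minimal polynomial $\alpha_j$; since $A$ is diagonalizable, so is $A_j$, whence $\alpha_j$ has simple roots and $A_j$ possesses exactly $n_j:=\deg\alpha_j$ distinct nonzero eigenvalues. These eigenvalues lie in the spectrum of $A$ and so inherit the nonsingularity and distinct-modulus properties of Assumption \ref{assum2}. Because $\alpha_{j+1}\mid\alpha_j$ gives $n_j\le n_1$ for every $j$, the hypothesis $|S|\ge n_1$, with $S:=\{i:\sigma(i)=1\}$, also yields $|S|\ge n_j$ for each $j$.

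The main step is an induction on $j$ establishing
\[
U_j:=\mathrm{span}\bigl\{A^{t-1-i}b_{(l)} : i\in S,\ 1\le l\le j\bigr\}\supseteq\mathcal{X}_1\oplus\cdots\oplus\mathcal{X}_j,
\]
where $b_{(l)}$ denotes the $l$-th column of $B$. For $j=1$, the generators $A^{t-1-i}b_{(1)}$ live in $\mathcal{X}_1$ with $\mathcal{X}_1$-component $A_1^{t-1-i}b_{11}$; applying Theorem \ref{singleipcdns} to $(A_1,b_{11})$ with $|S|\ge n_1$ shows these span $\mathcal{X}_1$. For the inductive step, the new generators $A^{t-1-i}b_{(j)}$ lie in $\mathcal{X}_1\oplus\cdots\oplus\mathcal{X}_j$ with $\mathcal{X}_j$-component $A_j^{t-1-i}b_{jj}$, and these components span $\mathcal{X}_j$ by Theorem \ref{singleipcdns} applied to $(A_j,b_{jj})$. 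Given any $v\in\mathcal{X}_j$, take a linear combination of the $A^{t-1-i}b_{(j)}$ whose $\mathcal{X}_j$-component is $v$; its remaining ``upper'' component lies in $\mathcal{X}_1\oplus\cdots\oplus\mathcal{X}_{j-1}\subseteq U_{j-1}$ and can be subtracted off, placing $v$ in $U_j$. Thus $U_j\supseteq U_{j-1}+\mathcal{X}_j\supseteq\mathcal{X}_1\oplus\cdots\oplus\mathcal{X}_j$. Taking $j=k$ shows $C_{\sigma(t-1)}(A,B)$ has full row rank $n$, so $W_{\sigma(t)}\succ 0$.

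The main obstacle I anticipate is at the inductive step, in managing the ``upper'' $(\mathcal{X}_1\oplus\cdots\oplus\mathcal{X}_{j-1})$-components of the new generators: one has to confirm that they can be neutralized using $U_{j-1}$ without disturbing the newly-won $\mathcal{X}_j$ directions. Framing this as a projection-onto-$\mathcal{X}_j$ argument, as sketched above, keeps the reasoning purely structural and avoids any generalized Vandermonde computation beyond what Theorem \ref{singleipcdns} has already established for each single-input block.
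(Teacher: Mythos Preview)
Your proof is correct and rests on the same structural skeleton as the paper's: pass to the canonical coordinates of Proposition~\ref{wh}, use the block upper-triangular shape of $B$ together with the controllability of each cyclic pair $(A_j,b_{jj})$, and run a sequential argument across the blocks. The execution, however, is dual to the paper's. The paper works on the kernel side: it writes $W_{\sigma(t)}=M_1+\cdots+M_{k+1}$ as a sum of positive semidefinite pieces (one per column of $B$), invokes the Hadamard-product representation of Theorem~\ref{gramexp} to identify each $M_j$ explicitly, and then argues that $v^{*}W_{\sigma(t)}v=0$ forces $v_1=0$, then $v_2=0$, and so on. You work on the image side, showing directly that the column span of $C_{\sigma(t-1)}(A,B)$ contains $\mathcal{X}_1\oplus\cdots\oplus\mathcal{X}_j$ by induction, treating Theorem~\ref{singleipcdns} as a black box for each block and never unpacking the Hadamard product in the multi-input step. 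Your route is slightly more elementary and keeps the Vandermonde reasoning encapsulated inside Theorem~\ref{singleipcdns}; the paper's route has the advantage of producing the explicit decomposition $W_{\sigma(t)}=\sum_j M_j$, which it reuses immediately in Corollary~\ref{necsuf} to state a necessary and sufficient kernel-intersection criterion. A small cosmetic point: the identity you quote should read $W_{\sigma(t)}=C_{\sigma(t)}(A,B)C_{\sigma(t)}(A,B)^{*}$ rather than mixing the indices $t$ and $t-1$, but this does not affect the argument.
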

\begin{proof}
 For simplicity, we assume that there are just two invariant factors $\alpha_1$ and $\alpha_2$. The general case follows in exactly similar manner. 
 Let  
 {\small\begin{eqnarray}
 A= \left[ \begin{array}{cc} A_1 & 0\\0 & A_2 \end{array} \right],
 B = \left[ \begin{array}{cccc} b_{11} & b_{12}&*\\0 & b_{22}&* \end{array} \right]\label{new1}\\
 \hat{\Lambda}_{\sigma(t)}=\left[ \begin{array}{c} \hat{\Lambda}_{\sigma(t)}(A_1) \\ \hat{\Lambda}_{\sigma(t)}(A_2) \end{array} \right],V= \left[ \begin{array}{cc} V_1 &0\\0& V_2 \end{array} \right]. \label{new2}
\end{eqnarray}}
Let {\small$P=\hat{\Lambda}_{\sigma(t)}\hat{\Lambda}_{\sigma(t)}^*$}. 
Note that 
{\small\begin{eqnarray}
 P=\left[ \begin{array}{cc} \hat{\Lambda}_{\sigma(t)}(A_1)\hat{\Lambda}_{\sigma(t)}^*(A_1) & \hat{\Lambda}_{\sigma(t)}(A_1)\hat{\Lambda}_{\sigma(t)}^*(A_2)\\
 \hat{\Lambda}_{\sigma(t)}(A_2)\hat{\Lambda}_{\sigma(t)}^*(A_1)&\hat{\Lambda}_{\sigma(t)}(A_2)\hat{\Lambda}_{\sigma(t)}^*(A_2)\end{array} \right].\label{new3}
\end{eqnarray}}
From Theorem $\ref{gramexp}$, 
{\small\begin{eqnarray}
W_{\sigma(t)}&=&(VBB^*V^*) \circ (\hat{\Lambda}_{\sigma(t)}\hat{\Lambda}_{\sigma(t)}^*). \label{new4}
\end{eqnarray} }
Let $b_1=\left[ \begin{array}{cc} b_{11}\\0 \end{array} \right], b_2=\left[ \begin{array}{c} b_{12}\\b_{22} \end{array} \right]$ and 
$B_3=\left[ \begin{array}{c} *\\* * \end{array} \right]$. Observe that 
{\small\begin{eqnarray}
 BB^*=b_1b_1^*+b_2b_2^*+B_3B_3^*. \nonumber
\end{eqnarray}}
(Note that if $m=k$ i.e., if $m=2$ in this case, then we define $B_3=0$.)
 Using this decomposition of $BB^*$, we get {\small
\begin{eqnarray}
 W_{\sigma(t)}&=&\sum_{i=0}^{t}\sigma(t-i)A^iBB^*(A^*)^i\nonumber\\
 &=&\sum_{i=0}^{t}\sigma(t-i)A^ib_1b_1^*(A^*)^i+\sum_{i=0}^{t}\sigma(t-i)A^ib_2b_2^*(A^*)^i\nonumber\\
 &&+\sum_{i=0}^{t}\sigma(t-i)A^iB_3B_3^*(A^*)^i\label{new5}.
\end{eqnarray}}

From $(\ref{new1})$, $(\ref{new2})$, $(\ref{new3})$, $(\ref{new4})$ and $(\ref{new5})$,
{\small\begin{eqnarray}
 W_{\sigma(t)}=M_1+M_2+M_3
\end{eqnarray} }
 where
 {\small\begin{eqnarray}
 M_1&=&\sum_{i=0}^{t}\sigma(t-i)A^ib_1b_1^*(A^*)^i\nonumber\\
 &=&\left[ \begin{array}{cc} (V_1b_{11}b_{11}^*V_1^*)\circ (P_{11})& 0\\0 & 0 \end{array} \right]\label{m1},\\
 M_2&=&\sum_{i=0}^{t}\sigma(t-i)A^ib_2b_2^*(A^*)^i=\nonumber
 \end{eqnarray} }
 {\small\begin{eqnarray}
 \left[ \begin{array}{cc} (V_1b_{12}b_{12}^*V_1^*)\circ (P_{11})& (V_1b_{12}b_{22}^*V_2^*)\circ (P_{12})\\
 (V_2b_{22}b_{12}^*V_1^*)\circ (P_{12}^*) & (V_2b_{22}b_{22}^*V_2^*)\circ (P_{22}) 
 \end{array} \right]\label{m2}
 \end{eqnarray}}
 where 
 {\small\begin{eqnarray}
  P_{11}&=& \hat{\Lambda}_{\sigma(t)}(A_1)\hat{\Lambda}_{\sigma(t)}^*(A_1)\nonumber\\
  P_{12}&=&\hat{\Lambda}_{\sigma(t)}(A_1)\hat{\Lambda}_{\sigma(t)}^*(A_2)\nonumber\\
  P_{12}^*&=&\hat{\Lambda}_{\sigma(t)}(A_2)\hat{\Lambda}_{\sigma(t)}^*(A_1)\nonumber\\
  P_{22}&=&\hat{\Lambda}_{\sigma(t)}(A_2)\hat{\Lambda}_{\sigma(t)}^*(A_2)\nonumber
 \end{eqnarray}}
 and {\small$M_3=\sum_{i=0}^{t}\sigma(t-i)A^iB_3B_3^*(A^*)^i$}.
 
Note that if $\sigma$ has at least $n_1$ non-zero entries,  
then both {\small$\hat{\Lambda}_{\sigma(t)}(A_1)$} and {\small$\hat{\Lambda}_{\sigma(t)}(A_2)$} are 
full row rank. 
Since {\small$M_1$, $M_2$} and {\small$M_3$} are positive semidefinite, {\small$vW_{\sigma(t)}v^*=0$ $\Leftrightarrow$ $vM_1v^*=0$, $vM_2v^*=0$} and 
{\small$vM_3v^*=0$}. 
One can write {\small$v=v_1\oplus v_2$} such that {\small$v_1 \in $} row span of {\small$V_1$} and {\small$v_2 \in$} row span of {\small$V_2$}. 
Note that {\small$vM_1v^*=0$ $\Leftrightarrow$ $v_1=0$}. 
Suppose {\small$v_1=0$} and {\small$v=0\oplus v_2$}, then {\small$vM_2v^* = 0$ $\Leftrightarrow$ $v_2= 0$}. 
Thus, {\small$vW_{\sigma(t)}v^*=0$} is possible only for 
{\small$v=0$}. Therefore, if {\small$\sigma$} has at least {\small$n_1$} non-zero entries, then {\small$W_{\sigma(t)}$} is positive definite. The general case 
for {\small$k$} invariant factors follows using similar arguments. Let {\small$M_j = \sum_{i=0}^{t}\sigma(t-i)A^ib_jb_j^*(A^*)^i$} for {\small$1\le j \le k$} and 
{\small$M_{k+1}=\sum_{i=0}^{t}\sigma(t-i)A^iB_{k+1}B_{k+1}^*(A^*)^i$}. 
We write {\small$BB^* = b_1b_1^*+\ldots+b_kb_k^*+B_{k+1}B_{k+1}^*$} and {\small$W_{\sigma(t)}= M_1+\ldots+M_k+M_{k+1}$} 
as a sum of $k+1$ positive semidefinite matrices and apply the same trick used above. 
If {\small$m=k$}, then we define {\small$M_{k+1}=0$} and the same arguments work. 
\end{proof}
\begin{example}\label{ex2}
Let 
 {\small\begin{eqnarray}
   A= \left[ \begin{array}{ccccc} 2& 0& 0& 0& 0\\0& 4& 0& 0& 0\\0& 0& 5& 0& 0\\0& 0& 0& 2& 0\\0& 0& 0& 0& 4 \end{array} \right], B= 
  \left[ \begin{array}{cc} 1 &1\\1& 0\\1& 1\\0& 1\\0& 1 \end{array} \right].\nonumber
 \end{eqnarray}}Observe that 
 {\small$A$} has two invariant factors of degree {\small$3$} and {\small$2$} respectively. Thus, {\small$n_1=3$, $n_2=2$}. 
 Let {\small$\sigma = \{1,0,0,1,0,0,1\}$}. We observe that the condition of Theorem \ref{repeig} is 
 satisfied and {\small$W_{\sigma}$} is positive definite. \QEDopen
\end{example}
Note that using the notation used in the above theorem, we can write {\small$BB^T=\sum_{i=1}^kb_ib_i^* +B_{k+1}B_{k+1}^*$}. The controllability Gramian is 
given by 
{\small\begin{eqnarray}
 W_{\sigma(t)}&=&\sum_{i=1}^{k+1}M_i\label{gramsum}
\end{eqnarray}}where {\small$M_i$} is as defined in the proof of Theorem \ref{repeig}. The following corollary gives necessary and sufficient conditions for controllability 
of multi-input systems.
\begin{corollary}\label{necsuf}
 For a multi-input system {\small$(A,B)$, ${W}_{\sigma(t)}$} is positive definite for a signal {\small$\sigma$} at time {\small$t$ $\Leftrightarrow$ $\cap_{i=1}^{k+1}$} 
 ker{\small$(M_i) =0$}.
\end{corollary}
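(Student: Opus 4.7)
The plan is to deduce the corollary directly from the decomposition obtained in the proof of Theorem \ref{repeig}, together with the standard fact that the kernel of a sum of positive semidefinite matrices equals the intersection of their individual kernels.

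First, I would recall from equation \eqref{gramsum} that we already have the representation $W_{\sigma(t)} = \sum_{i=1}^{k+1} M_i$, where $M_j = \sum_{\ell=0}^{t}\sigma(t-\ell)A^\ell b_j b_j^* (A^*)^\ell$ for $1 \le j \le k$ and $M_{k+1} = \sum_{\ell=0}^{t}\sigma(t-\ell)A^\ell B_{k+1} B_{k+1}^* (A^*)^\ell$. Each $M_i$ is a nonnegative combination of Hermitian rank-one (or low-rank) outer products of the form $(A^\ell b_j)(A^\ell b_j)^*$, hence each $M_i$ is positive semidefinite.

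Next, the key observation is that for any PSD matrix $M$ and any vector $v$, the identity $v^* M v = \|M^{1/2}v\|^2$ gives $v^* M v = 0 \Leftrightarrow M^{1/2}v = 0 \Leftrightarrow Mv = 0$, so $\ker(M) = \{v : v^* M v = 0\}$. Applying this to the sum,
\begin{equation*}
v^* W_{\sigma(t)} v = \sum_{i=1}^{k+1} v^* M_i v,
\end{equation*}
where every summand on the right-hand side is nonnegative. Therefore $v^* W_{\sigma(t)} v = 0$ iff $v^* M_i v = 0$ for every $i$, iff $v \in \ker(M_i)$ for every $i$. This yields $\ker(W_{\sigma(t)}) = \bigcap_{i=1}^{k+1} \ker(M_i)$.

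Finally, since $W_{\sigma(t)}$ is itself positive semidefinite (being a sum of PSD matrices), it is positive definite if and only if its kernel is trivial, so $W_{\sigma(t)} \succ 0$ if and only if $\bigcap_{i=1}^{k+1} \ker(M_i) = \{0\}$, which is the claim. There is no real obstacle here; the only point that requires care is the equivalence $v^* M v = 0 \Leftrightarrow Mv = 0$ for PSD $M$, which is what prevents the argument from being a one-liner and which is used implicitly in the last paragraph of the proof of Theorem \ref{repeig}.
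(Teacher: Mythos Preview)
Your proof is correct and follows exactly the same approach as the paper: use the decomposition \eqref{gramsum} into positive semidefinite summands and the fact that the kernel of a sum of PSD matrices is the intersection of their kernels. The paper's own proof is a one-line version of your argument, simply asserting that since $M_i\ge 0$ and $W_{\sigma(t)}=\sum_i M_i$, one has $\ker(W_{\sigma(t)})=0$ iff $\cap_i \ker(M_i)=0$; you have merely unpacked the equivalence $v^*Mv=0\Leftrightarrow Mv=0$ for PSD $M$ that the paper leaves implicit.
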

\begin{proof}
 Note that since {\small$M_i\ge0$ ($i=1,\ldots,k+1$)}, 
 from Equation {\small$(\ref{gramsum})$}, ker{\small$(W_{\sigma(t)}) =0 $} implies that {\small$\cap_{i=1}^{k+1}$} ker{\small$(M_i) =0$} and conversely. 
\end{proof}
Thus, given a signal $\sigma$ and a fixed time $t$, we obtain necessary and sufficient conditions for {\small$W_{\sigma(t)}$} to be positive definite for 
single input as well as multi-input systems. 
Note that 
the sufficient condition of Theorem \ref{repeig} is not necessary.  
For example, in Example \ref{ex2}, if {\small$B=I_5$} then for any non-trivial {\small$\sigma$, $W_{\sigma}$} is positive definite for 
{\small$\sigma$} even when the number of its non-zero entries are strictly less than the degree of the minimal polynomial of {\small$A$}.  
\begin{lemma}\label{necc}
 Let $m$ be the number of inputs of $(\ref{switch})$ and $n$ be the dimension of the state space. Let $l = \lceil \frac{n}{m} \rceil$. Then,  
 $W_{\sigma}$ is singular if the number of non-zero entries of $\sigma$ are strictly less than $l$.
\end{lemma}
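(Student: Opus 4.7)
The plan is to exploit a dimension-counting argument on the controllability matrix $C_{\sigma(t-1)}(A,B)$ defined in \eqref{controllb}. Since $W_\sigma = C_{\sigma(t-1)}(A,B) C_{\sigma(t-1)}(A,B)^*$, the Gramian is invertible if and only if $C_{\sigma(t-1)}(A,B)$ has full row rank $n$. So the goal reduces to bounding $\mathrm{rank}(C_{\sigma(t-1)}(A,B))$ from above whenever $\sigma$ has few non-zero entries.

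First I would partition the columns of $C_{\sigma(t-1)}(A,B)$ into the block columns $A^{t-1-i}B\sigma(i)$, each of which is a matrix with $m$ columns. If $\sigma(i)=0$, this block contributes only zero columns. If $\sigma(i)=1$, it contributes at most $m$ linearly independent columns. Consequently, letting $s$ denote the number of time instants at which $\sigma$ is non-zero, one has
\begin{equation*}
\mathrm{rank}\bigl(C_{\sigma(t-1)}(A,B)\bigr) \le s\,m.
\end{equation*}

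Next I would convert the hypothesis $s<l$ into the inequality $sm<n$. Because $s$ is an integer and $l=\lceil n/m\rceil$ is the least integer at least $n/m$, the assumption $s<l$ forces $s\le l-1<n/m$, so $sm<n$. Combining with the rank bound above gives $\mathrm{rank}(C_{\sigma(t-1)}(A,B))<n$, hence $W_\sigma = C_{\sigma(t-1)}(A,B)C_{\sigma(t-1)}(A,B)^*$ cannot have rank $n$ and is therefore singular.

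There is essentially no obstacle here beyond being careful with the ceiling function: the argument is purely linear-algebraic and makes no use of Assumption~\ref{assum2} or the Hadamard decomposition, so it holds for arbitrary $A$ and $B$. The slight subtlety worth flagging is that the bound $\mathrm{rank}(X\circ Y)\le \mathrm{rank}(X)\,\mathrm{rank}(Y)$ used in Theorem~\ref{singleipcdns} is not needed; the estimate follows directly from counting how many columns of $C_{\sigma(t-1)}(A,B)$ can be non-zero when $\sigma$ has at most $l-1$ non-zero entries.
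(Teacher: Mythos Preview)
Your argument is correct and is essentially the paper's own proof, which also counts the non-zero columns of $C_{\sigma}(A,B)$ to conclude that fewer than $n$ columns remain and hence $W_{\sigma}$ is singular. You have simply spelled out the rank bound and the ceiling-function inequality more carefully than the paper does.
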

\begin{proof}
 If the number of non-zero entries of $\sigma$ are strictly less than $l$, then the number of columns of {\small$C_{\sigma}(A,B)$} are less than $n$. Hence,  
 {\small$W_{\sigma}$} is singular. 
\end{proof}

The above lemma says that for {\small$W_{\sigma}$} to be non-singular, $\sigma$ must have at least $l$ non-zero entries. Thus, we have a necessary condition 
of $\sigma$ which can be checked by looking at the entries of $\sigma$. Again this necessary condition is not sufficient as the following example shows. 
\begin{example}
Let {\small$A= \left[ \begin{array}{cccc} 2& 0& 0& 0\\0& 4& 0& 0\\0& 0& 5& 0\\0& 0& 0& 2\end{array} \right], B= 
  \left[ \begin{array}{cc} 1 &0\\1& 0\\1& 0\\0& 1 \end{array} \right]$}.
 Since {\small$n=4$} and {\small$m=2$, $\lceil \frac{n}{m} \rceil = 2$}. Observe that if {\small$\sigma$} has two non-zero entries 
 (say {\small$\sigma=\{1,0,0,1\}$}), 
 {\small$W_{\sigma}$} still remains singular. \QEDopen
\end{example}

\begin{remark}\label{multiipcase}
 Let $n_{\sigma}$ be the number of non-zero entries of $\sigma$ and $n_1$ be the degree of the minimal polynomial of the system matrix {\small$A$}. Then we observe that 
\begin{itemize}
 \item if {\small$n_{\sigma}<\lceil \frac{n}{m} \rceil$}, then {\small$W_{\sigma}$} is singular.
 \item if {\small$n_{\sigma}\ge n_1$}, then {\small$W_{\sigma}$} is positive definite.
 \item if {\small$\lceil \frac{n}{m} \rceil \le n_{\sigma} < n_1$}, then we need Corollary \ref{necsuf}. \QEDopen
\end{itemize}
\end{remark}

\begin{theorem}\label{multi-ip-prob}
 Let $p$ be the probability 
 that $\sigma(t)=1$ for $t\in\mathbb{N}$. 
 Consider a multi-input system of the form $(\ref{switch})$. Assume that {\small$A$} is non-singular and {\small$(A,B)$} controllable. Let $m$ be the number of inputs and 
 $n_1$ be the degree of the minimal polynomial of {\small$A$}. Let $n$ be the dimension of state space and $l=\lceil \frac{n}{m} \rceil$. Let {\small$P(T)$} be the 
 probability that an arbitrary state transfer is possible at time {\small$T$}. 
 Then, 
 {\small\begin{equation}
 \sum_{i=n_1}^T{T\choose i}p^{i}(1-p)^{T-i}\le P(T)\le 1-\sum_{i=1}^{l-1}{T\choose i}p^{i}(1-p)^{T-i}. \label{bds} 
 \end{equation}}
 \end{theorem}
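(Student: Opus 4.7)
The plan is to package Remark \ref{multiipcase} as two set inclusions on the underlying sample space of signals and then evaluate the corresponding binomial probabilities. Let $n_\sigma$ denote the number of ones in $\sigma$ on the interval $\{0,1,\ldots,T-1\}$; since the $\sigma(t)$ are i.i.d.\ Bernoulli$(p)$, the random variable $n_\sigma$ is Binomial$(T,p)$, so for any integer $r$,
\begin{equation*}
\Pr(n_\sigma = r) = \binom{T}{r} p^r (1-p)^{T-r}.
\end{equation*}

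Next I would let $E$ be the event ``$W_{\sigma(T-1)}$ is positive definite,'' which by Proposition~1 is exactly the event that an arbitrary state transfer is possible at time $T$; so $P(T)=\Pr(E)$. Now I would invoke the three bullets of Remark \ref{multiipcase} as a chain of implications between events. By Theorem \ref{repeig}, $\{n_\sigma \ge n_1\} \subseteq E$, and by Lemma \ref{necc}, $E \subseteq \{n_\sigma \ge l\}$. Monotonicity of $\Pr(\cdot)$ then gives
\begin{equation*}
\Pr(n_\sigma \ge n_1) \;\le\; P(T) \;\le\; \Pr(n_\sigma \ge l).
\end{equation*}

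Finally I would substitute the binomial tail expressions. The lower bound becomes $\sum_{i=n_1}^{T}\binom{T}{i}p^i(1-p)^{T-i}$ directly. For the upper bound, I would write $\Pr(n_\sigma \ge l) = 1 - \Pr(n_\sigma < l)$ and expand the complement as a finite binomial sum over $i = 0,1,\ldots,l-1$, matching the stated expression.

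There is no genuine obstacle once the reduction to $n_\sigma$ is made: all the analytical content was already carried out in Theorem \ref{repeig} (sufficiency) and Lemma \ref{necc} (necessity), and the probabilistic step is just independence of the $\sigma(t)$'s. The only subtle point worth flagging in the proof is that these bounds are generally not tight because, as Corollary \ref{necsuf} and Example~2 illustrate, for $l \le n_\sigma < n_1$ whether $W_\sigma$ is nonsingular depends on the precise location of the ones in $\sigma$ and on the structure of $(A,B)$, so one cannot close the gap without additional information about $B$ relative to the invariant-subspace decomposition of $A$.
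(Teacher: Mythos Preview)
Your proof is correct and follows exactly the paper's route: invoke Theorem~\ref{repeig} for the lower bound and Lemma~\ref{necc} for the upper bound, then read off the binomial tail probabilities. The only cosmetic difference is that your complement sum naturally runs from $i=0$, whereas the paper's stated upper bound starts at $i=1$, which is a slightly weaker (but still valid) inequality.
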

\begin{proof}
 It follows from Theorem \ref{repeig} that if the switching signal $\sigma$ has at least $n_1$ non zero entries, then $(\ref{switch})$ is controllable. 
 Thus, {\small$\sum_{i=n_1}^T{T\choose i}p^{i}(1-p)^{T-i}\le P(T)$}. From Lemma $\ref{necc}$, it is clear that for a switching signal $\sigma$, an arbitrary state transfer is not possible for 
 $(\ref{switch})$ if the number of non zero entries of the switching signal $\sigma$ is strictly less than $l$. Therefore, 
 {\small$P(T)\le 1-\sum_{i=1}^{l-1}{T\choose i}p^{i}(1-p)^{T-i}$}. 
\end{proof}
\begin{definition}\label{contrsigs}
Let {\small$\mathcal{S}^{T}_{c}$} be the set of switching signals for which an arbitrary state transfer 
is possible in time $T$. Let 
{\small$\mathcal{S}^T_{\ge n}$} denote the set of switching signals of length {\small$T$} with the number of non zero entries greater than or equal to $n$ and 
{\small$\mathcal{S}^T_{\le n}$} denote the set of switching signals of length {\small$T$} with the number of non zero entries less than or equal to $n$.  
\end{definition}
\begin{remark}\label{enumsigs}
 It is clear that for single input systems, {\small$\mathcal{S}^T_{c}=\mathcal{S}^T_{\ge n}$}. For multi-input systems, we do not have an exact 
 enumeration of {\small$\mathcal{S}^T_{c}$}. However, {\small$\mathcal{S}^T_{\ge n_1}\subset \mathcal{S}^T_{c}\subset \mathcal{S}^T\setminus 
 \mathcal{S}^T_{\le \lceil \frac{n}{m} \rceil}$} where $m$ is the number of inputs. \QEDopen
\end{remark}
\begin{definition}\label{avgeng}
Let {\small$P(\sigma)$} be the probability of occurrence of $\sigma$. 
The average control input energy to go from $x_0$ to $x_f$ in {\small$T$} time steps over the set {\small$\mathcal{S}^{T}_{c}$} is 
 {\small\begin{eqnarray}
 E^i_{av}(x_0,x_f,T):=
 \sum_{\sigma \in \mathcal{S}^T_{c}}P(\sigma)(x_f-x_0)^*W_{\sigma(T)}^{-1}(x_f-x_0). \label{aveng}
\end{eqnarray}} 
\end{definition}
\begin{theorem}\label{actplace}
 Let $n$ be the degree of the characteristic polynomial of {\small$A$} and let $n_1$ be the degree of minimal polynomial of {\small$A$}. 
 Let {\small$P(\sigma)$} be the probability of occurrence of $\sigma$. Then, 
 \begin{itemize}
  \item  For single input systems {\small$(\ref{switch})$, $E^i_{av}(x_0,x_f,T)=$}
  {\small\begin{eqnarray}
   \sum_{\sigma \in \mathcal{S}^T_{\ge n}}P(\sigma)(x_f-x_0)^*W_{\sigma(T)}^{-1}(x_f-x_0).
  \end{eqnarray}}
  \item  For multi-input systems {\small$(\ref{switch})$, $E^i_{av}(x_0,x_f,T)\ge$}
  {\small\begin{eqnarray}
   \sum_{\sigma \in \mathcal{S}^T_{\ge n_1}}P(\sigma)(x_f-x_0)^*W_{\sigma(T)}^{-1}(x_f-x_0).
  \end{eqnarray}}
 \end{itemize}
\end{theorem}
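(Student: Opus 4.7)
The plan is to prove both parts by directly invoking the characterizations of the controllable signal set $\mathcal{S}^T_c$ that have already been established earlier in the paper, together with the observation that every term in the average-energy sum is non-negative. No genuinely new computation is needed; the statement is a bookkeeping consequence of Theorems \ref{singleipcdns} and \ref{repeig}, and the argument reduces to identifying or bounding the index set of the summation in Definition \ref{avgeng}.

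For the single-input case, I would first recall from Theorem \ref{singleipcdns} (restated as the first sentence of Remark \ref{enumsigs}) that $W_{\sigma(T)}$ is positive definite if and only if $\sigma$ contains at least $n$ non-zero entries. Hence the set of signals permitting an arbitrary state transfer in $T$ steps satisfies $\mathcal{S}^T_c = \mathcal{S}^T_{\ge n}$, and substituting this identification into the definition of $E^i_{av}(x_0,x_f,T)$ gives the claimed equality immediately. It is worth noting that by Assumption \ref{assum2} the matrix $A$ is non-singular with distinct eigenvalues, so $W_{\sigma(T)}^{-1}$ exists for every $\sigma \in \mathcal{S}^T_c$ and each summand is well defined.

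For the multi-input case, Theorem \ref{repeig} yields the inclusion $\mathcal{S}^T_{\ge n_1} \subset \mathcal{S}^T_c$, since every signal with at least $n_1$ non-zero entries makes $W_{\sigma(T)}$ positive definite. On $\mathcal{S}^T_c$ the matrix $W_{\sigma(T)}^{-1}$ is positive definite, so $(x_f-x_0)^* W_{\sigma(T)}^{-1}(x_f-x_0) \ge 0$ for every admissible $\sigma$, and $P(\sigma) \ge 0$ as well. Restricting the sum in \eqref{aveng} from $\mathcal{S}^T_c$ down to the subset $\mathcal{S}^T_{\ge n_1}$ therefore only deletes non-negative terms, which delivers the desired lower bound.

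There is no real obstacle in the argument; the hard work is in Theorems \ref{singleipcdns} and \ref{repeig}. The only nuance to flag is why the second part is only an inequality: when $A$ has repeated eigenvalues, $\mathcal{S}^T_{\ge n_1}$ can be a strict subset of $\mathcal{S}^T_c$, as illustrated by the discussion after Corollary \ref{necsuf} (take $B = I_5$ in Example \ref{ex2}, where signals with fewer than $n_1$ non-zero entries can still render $W_\sigma$ positive definite). Since a sharp enumeration of $\mathcal{S}^T_c$ is unavailable in the multi-input case, one cannot in general upgrade the inequality to an equality, which is exactly why the statement is phrased asymmetrically between the two parts.
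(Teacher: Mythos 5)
Your proposal is correct and follows essentially the same route as the paper, whose proof simply cites Definition \ref{avgeng}, Theorem \ref{singleipcdns} and Theorem \ref{multi-ip-prob}; your only cosmetic difference is invoking Theorem \ref{repeig} directly (the source of the inclusion $\mathcal{S}^T_{\ge n_1}\subset\mathcal{S}^T_c$) and spelling out the non-negativity of the discarded summands, which the paper leaves implicit.
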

\begin{proof}
 Follows from Definition \ref{avgeng}, Theorem \ref{singleipcdns} and Theorem \ref{multi-ip-prob}. 
\end{proof}
Next, we consider the dual state estimation problem to obtain the probability that state estimation 
is possible from measured outputs with time going from {\small$0$} to {\small$T$}. 
\subsection{Probabilistic state estimation}
The next natural step is to 
 consider non-idealities in the transmission of measurements obtained by sensors over a communication network for discrete LTI systems 
\cite{JungersKunduHeemels}, \cite{sin,mo}. 
Consider the following discrete linear system subject to packet dropouts 
{\small\begin{eqnarray}\label{switch1}
 x(t+1)&=&Ax(t)\nonumber\\
 y(t) &=&\begin{cases} Cx(t), \mbox{ if } \sigma(t)=1,\\
 \emptyset, \;\;\;\;\;\;\;\mbox{ if } \sigma(t)=0. \end{cases}
\end{eqnarray}}where the switching signal $\sigma$ is random signal with Bernoulli distribution, taking values $0$ and $1$. 
The observability Gramian associated with discrete LTI systems is defined as {\small$W_t^{o}:=\sum_{i=0}^t(A^{*})^iC^{*}CA^i$} (\cite{hespanha1}). 
If {\small$W^o_{t}$} is singular, then the states in the null space of {\small$W^o_{t}$} are unobservable. 

 Define the associated observability Gramian for $(\ref{switch1})$ as
 {\small\begin{eqnarray}
  W^{o}_{\sigma(t)}&:=&\sum_{i=0}^{t}\sigma(i)(A^{*})^iC^{*}CA^i.\nonumber
 \end{eqnarray}}
 Note that, the observability matrix {\small$O_{\sigma(t)}(C,A)=$} 
{\small\begin{eqnarray}
  \left[ \begin{array}{cccc} \sigma(0)C^{*} & \sigma(1)(CA)^{*} &\cdots  & \sigma(t)(CA^{t})^{*}\end{array}\right]^{*}.\label{obsmat}        
\end{eqnarray}}
Let {\small$y=\left[ \begin{array}{ccccc}  y(0)^{*}&y(1)^{*}&\cdots & & y(t)^{*}\end{array}\right]^{*}$} be a vector of observed outputs. 
Then using {\small$y(t)=C\sigma(t)x(t)$}, we get {\small$x(0)=(W^{o}_{\sigma(t)})^{-1}O_{\sigma(t)}^{*}(C,A)y$}. 
 The exact counterpart of the arbitrary state transfer Theorems (Theorem \ref{gramexp}, Theorem \ref{singleipcdns}, Theorem \ref{repeig}, 
  Theorem \ref{multi-ip-prob}), hold for the observability Gramian and the state estimation for a given switching signal. 
  
  Let {\small$\mathcal{S}^T_o$} be the set of switching signals for which the observability Gramian 
  becomes full column rank. Using counterparts of Theorem $\ref{singleipcdns}$ and Theorem $\ref{multi-ip-prob}$, we can find the probability that 
  state estimation is possible from the measured outputs $y(0),\ldots,y(T)$ with packet dropouts. 
  
  Note that {\small$y^*y=x^*(0)W^{o}_{\sigma(t)}x(0)$}. The average output energy for a fixed time {\small$T$} over the set {\small$\mathcal{S}^T_o$} is defined as 
  {\small \begin{eqnarray}
            E^o_{av}(x(0),T)=\sum_{\sigma \in \mathcal{S}^T_{o}}P(\sigma)x^*(0)W^o_{\sigma(T)}x(0).
          \end{eqnarray}
}
It follows that the counterpart of Theorem $\ref{actplace}$ holds. 

\section{Probabilistic measures for optimal actuator/sensor placement}
We use the results obtained in the previous section to address optimal actuator/sensor placement problems for linear systems with packet dropouts. 
The following definition gives a new probabilistic measure for our models. 
\begin{definition}\label{probmet}
 Let {\small$T$} be fixed and 
 {\small\begin{eqnarray}
   \mu(T):= \sum_{\sigma \in \mathcal{S}^T_{c}}P(\sigma)\mbox{det}(W_{\sigma(T)})\label{avggramdetg}
 \end{eqnarray}}
\end{definition}
\begin{remark}\label{opt-act}
%
We can use {\small$(\ref{avggramdetg})$} as a controllability metric for the optimal actuator placement problem. 
Equation {\small$(\ref{avggramdetg})$} gives average volume 
reached over all signals in {\small$\mathcal{S}^T_{c}$} using unit energy inputs.  
Note that by Remark $\ref{enumsigs}$, for single input systems, {\small$\mathcal{S}^T_{c}=\mathcal{S}^T_{\ge n}$}. Hence, for single input systems, 
{\small$\mu(T)= \sum_{\sigma \in \mathcal{S}^T_{\ge n}}P(\sigma)\mbox{det}(W_{\sigma(T)})$} thus, one can obtain the optimal solution as {\small$\mathcal{S}^T_{\ge n}$} 
can be enumerated for a fixed time instance {\small$T$}. 

For multi-input systems, {\small$\mathcal{S}^T_{\ge n_1}\subset \mathcal{S}^T_{c}$}. Hence, 
{\small\begin{eqnarray}
\hat{\mu}(T):=\sum_{\sigma \in \mathcal{S}^T_{n_1}}P(\sigma)\mbox{det}(W_{\sigma(T)})\le \mu(T).  
\end{eqnarray}}Thus, we can use {\small$\hat{\mu}(T)$} as a lower bound 
for optimal actuator placement problem for multi-input systems. In other words, we consider the sub-optimal solution for multi-input systems obtained 
by considering {\small$\hat{\mu}(T)$} as a selection criterion. 
\QEDopen
%
%
\end{remark}
The following example demonstrates that the choice of an optimal actuator could be different for classical systems and systems with packet dropouts. 
It can be observed that the probabilities of packet dropouts associated with the communication network for each actuator plays a role in 
deciding the optimal actuator. 
\begin{example}\label{ex3}
%
Let {\small
\begin{equation}
 A= .04*\left[ \begin{array}{ccc} 2 & 16& 4\\5& 10& 15\\1 &20& 12 \end{array} \right], B= [b_1\; b_2\; b_3]=
  \left[ \begin{array}{ccc} 1&5&2\\4&1&1\\2&4&3 \end{array} \right].\nonumber
\end{equation}} Note that {\small$(A,b_i)$} is controllable for {\small$i=1,2,3$}. 
  Suppose we want to choose an optimal 
  actuator from {\small$b_1,b_2,b_3$}. Let {\small$T=3$}, and let det{\small$(W_T(A,b))$}  be the controllability metric. 
  The values of the determinant of the Gramian for three actuators are {\small$9.1718,315.2886$} and {\small$2.6837$} respectively, implying that 
   {\small$b_2$} gives the optimal actuator for the classical case. 
  
  Now suppose each actuator is connected to the system by a different communication network. Let {\small$p_1=0.8$, $p_2=0.4$} and {\small$p_3=0.5$} be the probabilities 
  of packet dropouts corresponding to the three actuators {\small$b_1,b_2$} and {\small$b_3$} respectively. Using Theorem {\small$\ref{singleipcdns}$}, 
  with {\small$(\ref{avggramdetg})$} as a 
  controllability metric, the values of {\small$\mu(3)$} for the three actuators are {\small$4.6959,2.5223$} and {\small$0.3355$} respectively. Thus, 
  {\small$b_1$} is the optimal actuator. 

  This demonstrates the role of probabilities of packet dropouts in the communication network deciding the choice of the optimal actuator. 
\QEDopen
\end{example}
\begin{remark}\label{sensplace1}
Note that for single output systems, 
{\small$\mathcal{S}^T_o=\mathcal{S}^T_{n}$} and for multi-output systems, {\small$\mathcal{S}^T_{n_1}\subset \mathcal{S}^T_o$}. 
We can define similar probabilistic measure as mentioned in Definition {\small$\ref{probmet}$} and Remark {\small$\ref{opt-act}$} for the optimal sensor placement problem. 
It is clear from Example {\small$\ref{ex3}$} that with the probabilistic measure, the choice of the optimal sensor is different from the classical 
observability measure. 
\QEDopen
\end{remark}
\begin{remark}
 Another possible controllability metric is 
 {\small$\gamma(T):=\sum_{\sigma \in \mathcal{S}^T_{c}}P(\sigma)\mbox{trace}(W_{\sigma(T)})$}. We can similarly compare 
 actuator placement problems for both classical and probabilistic models. In future, we wish to generalize some of the 
 classical controllability metrics for models considered here. 
\end{remark}

\subsection{Feedback laws and LQR}
Consider the following model
{\small\begin{eqnarray}\label{switchcomb}
 x(t+1)&=&Ax(t)+B\sigma(t)u(t)\nonumber\\
 y(t) &=& C\sigma(t)x(t). 
\end{eqnarray}}Observe that the same switching signal is used for the measured sensor outputs and the actuator inputs. Therefore, it 
is clear that if the state estimation is possible for a switching signal, then state feedback laws can be implemented for 
that particular switching signal. 
Suppose the observability matrix {\small$O_{\sigma(T)}(C,A)$} is full column rank for a particular signal {\small$\sigma$}. Hence, {\small$x(0)$} is uniquely determined. Thus, 
from the input-state equation {\small$x(t+1)=Ax(t)+B\sigma(t)u(t)$}, one can find the current state {\small$x(t)$} which allows us to implement state feedback laws. 

One can consider the finite horizon LQR problem for each switching signal $\sigma$ as follows:
{\small\begin{eqnarray}
 \mbox{min }J_{\sigma}:=x^{*}(T)Q_fx(T)+\sum_{t=0}^{T-1}x^{*}(t)Qx(t)+u^{*}(t)Ru(t)\label{lqr}
\end{eqnarray}}where {\small$Q,Q_f\ge0,R>0$}. Suppose the initial condition {\small$x(0)$} is fixed. 
For a fixed {\small$\sigma$}, we can consider $(\ref{switchcomb})$ as a linear time varying system and consider the LQR problem for linear time varying systems by choosing $A(t)=A$ 
and {\small$B(t)=B\sigma(t)$}. By solving 
the difference Riccati equation with time varying coefficient {\small$B(t)$}, we can obtain a state feedback {\small$u(t)=-K(t)x(t)$} 
(for a fixed signal {\small$\sigma$}) as a solution of the LQR problem. From the solution 
of the difference Riccati equation, we can compute the optimal cost say {\small$J_{\sigma}$} for each {\small$\sigma$}. Thus, we can compute the average LQR cost 
{\small$J_{avg}=\sum_{\sigma \in \mathcal{S}_c^T}P(\sigma)J_{\sigma}$} 
by considering 
all switching signals {\small$\mathcal{S}_c^T$} (or {\small$\mathcal{S}_o^T$}) for which the controllability and the observability matrix becomes full rank.
The average LQR cost can also be used as a selection metric for optimal actuator placement problem for a fixed initial state. 

In future, we wish to consider different models of switching signals for actuators 
and sensors instead of the model considered here.

\section{conclusion and future work}
We found necessary and sufficient conditions on the admissible signals $\sigma$ (which models systems with a packet loss) such that the 
controllability Gramian $W_{\sigma(t)}$ is positive definite 
for a fixed $t$. This allowed us to obtain necessary and sufficient conditions for an arbitrary state transfer for our models.
We considered the analogous state estimation problem as well. We introduced a notion of average input/output energy 
and defined a new probabilistic measure 
which allowed us to have a new selection criterion for optimal actuator/sensor placement problem for single input/multi-input systems with packet 
dropouts. We stated how feedback laws and LQR problem can be considered for these models. 

In future, we wish to extend the results obtained for more general systems 
by relaxing a few assumptions made here. We wish to develop efficient algorithms/heuristics to solve the optimal actuator/sensor placement problem and 
extend the other classical controllability metrics to systems with packet dropouts using similar ideas. 
Moreover, we wish to analyze the performance of the probabilistic measure defined in this article by 
using the notion of tight frames used in \cite{shchat}; where we expect that the tight frames would lead to an optimal solution \cite{shchat}. 
Furthermore, we wish to study the optimal actuator placement problem subject to energy bounds considered in \cite{tzoum}. 
There are networks where the probability
that a packet dropout could be a function of system states such
as transmission rate. In future, we wish to consider such state-dependent switching signals 
to obtain trade-offs between the allowable packet loss probability and the 
transmission rate such that the system remains controllable. Moreover, we also wish to incorporate time delays in our switching signals. 
\section{Acknowledgement}
Author is thankful to Prof. N. Athanasopoulos and Prof. R. M. Jungers for many useful discussions and comments.

\end{document}